\pgfplotsset{compat=newest}
\newlength\figureheight
\newlength\figurewidth
\newtheorem{remark}{\bfseries Remark}
\newtheorem{prop}{\bfseries Proposition}
\DeclareMathOperator{\argmin}{argmin}
\let\mathbb=\mathds 
\DeclareMathOperator{\Tr}{Tr}
\title{ \bf  An Online Feedback Optimization Approach to Voltage Regulation in Inverter-Based Power Distribution Networks}
\author{Alejandro D. Dom\'inguez-Garc\'ia,  Madi Zholbaryssov, Temitope Amuda, and Olaoluwapo Ajala
\thanks{Dom\'inguez-Garc\'ia, Amuda, and Ajala are with the Department of Electrical and Computer Engineering of the University of Illinois at Urbana-Champaign, Urbana, IL 61801,
USA. Email: {\tt\small \{aledan, tamuda2, ooajala2\}@ILLINOIS.EDU.}
}
\thanks{Zholbaryssov is with Typhoon HIL Inc., Somerville, MA 02143, USA. Email: Email: {\tt\small madi.zholbaryssov@typhoon-hil.com}. }
\thanks{This work was supported in part by the U.S. Department of Energy’s Office of Energy Efficiency and Renewable Energy (EERE) under Solar Energy Technologies Office (SETO) Agreement Number EE0009025; and the C3.ai Digital Transformation Institute.} 
}
\begin{document}

\maketitle

\thispagestyle{empty}
\pagestyle{empty}

\begin{abstract}
We address the  problem of controlling the reactive power setpoints of a set of distributed energy resources (DERs) in a power distribution network so as to mitigate the impact of variability in uncontrolled power injections associated with, e.g., renewable-based generation.  We formulate the control design problem as a stochastic optimization problem, which we solve online using a modified version of a  projected stochastic gradient descent (PSGD) algorithm. The proposed PSGD-based algorithm utilizes sensitivities of changes in  bus voltage magnitudes to changes in DER reactive power setpoints; such sensitivities are learned online via a recursive least squares estimator (rLSE). To ensure proper operation of the rLSE, the sequence of incremental changes in DER reactive power setpoints needs to be persistently exciting, which is guaranteed by a mechanism  built into the   controller. We analyze the stability of the closed-loop system and showcase controller performance via numerical simulations on the IEEE  123-bus distribution test feeder.
\end{abstract}



\section{Introduction}
\label{sec:intro}
Driven by a global effort to decarbonize the electricity sector, electric power distribution networks are undergoing major transformations in both power production and demand. These transformations include the massive integration of variable generation, e.g., photovoltaic   installations, new types of loads, e.g., plug-in electric vehicles, and storage devices; such energy assets are commonly referred to as distributed energy resources (DERs). Coupled with more frequent (and possibly random) network topology changes, and the increasing uncertainty in load demand, the rapidly increasing deployment of DERs poses numerous operational challenges for which existing control schemes in power distribution networks are not well equipped to handle. In light of this, the objective of this paper is to address one such challenge---ensuring effective regulation of voltage magnitude across all buses of the power distribution  network. 

 Currently, voltage   regulation in power distribution networks is accomplished, for the most part, through tap-changing under-load transformers  (TCULs) and fixed/switched capacitor banks (see, e.g., \cite[p.~16]{IEEE_std_voltage2018}). However, while these devices are effective in managing slow changes in voltage (minutes to hours), they are not suitable for managing fast voltage fluctuations (seconds to minutes) arising, e.g., from rapid changes in renewable-based power generation. This problem can be effectively addressed by controlling the reactive power injected into the distribution network by power-electronic inverter-interfaced DERs---a solution that has been actively pursued in the last decade   (see, e.g, \cite{BrettADG2013,XuDo20} and the references therein).

 
 In this paper, we also pursue the idea of utilizing inverter-interfaced DERs for voltage regulation. Building  on our earlier work on data-driven control algorithms in  \cite{XuDo20,XuDo19,MadiADG2021}, we design a voltage regulation  scheme that does not rely on an a priori known model of the system to be controlled; instead, the scheme utilizes data to estimate such a model online, while simultaneosly executing a feedback control algorithm.  We demonstrate that the proposed approach is adaptive to constantly varying system conditions and disturbances, and is capable of utilizing the reactive power support capabilities of inverter-based resources to effectively provide voltage regulation at a fast time-scale (i.e., milliseconds to seconds).
 
 The control design problem is cast as a stochastic optimization problem whose goal is to determine the reactive power setpoints of DERs so as to minimize the expectation of bus voltage deviations from their nominal values. In order to solve this problem, we utilize a projected stochastic gradient descent (PGSD) algorithm (see e.g., \cite{kushner2003stochastic}). Proper initialization and execution of the algorithm for a single step essentially results in a feedback controller that utilizes measured voltage deviations to adjust reactive power setpoints of DERs. In its basic form, the algorithm relies on knowing the sensitivities of changes in bus voltages with respect to changes in reactive power injections. Instead of obtaining these sensitivities offline via a model, we design a recursive least squares estimator (rLSE) that learns them online by using real-time voltage measurement data and the sequence of reactive power setpoints generated by the controller. The rLSE is executed in parallel with the controller, and in order to ensure  proper operation of the estimator, the sequence of incremental changes in DER reactive power setpoints needs to be persistently exciting. To this end, we modify the basic control algorithm  to include a mechanism that ensures this. 
 
 Most of the existing literature on utilization of inverter-interfaced DERs for voltage regulation rely on the use of exact models of the network (see, e.g., \cite{BrettADG2013} and the references therein). However, these methodologies have practical limitations in that, due to the limited number of sensors in power distribution networks, the models are hard to obtain in practice. More recently, there have been several papers that propose the use of data-driven techniques for addressing the voltage regulation  problem in power distribution networks, as well as in other related control and estimation problems.  For example, the authors of \cite{XuDo19} present a data-driven framework for coordinating the active and reactive power injections of DERs to provide  voltage regulation in radial networks. The proposed scheme utilizes estimates of network voltage sensitivities obtained online by fitting measurements to the so-called LinDistFlow model; by contrast, the method we propose  here is not restricted to radial networks, and it does not assume any particular model structure. The authors of \cite{XuDo20} propose a data-driven voltage regulation approach based on the estimated topology and line parameters of radial power distribution systems. The authors of  \cite{MadiADG2021} propose a model-free control scheme for regulating voltage, frequency, and line flows  that assumes no prior information on the system, and, hence, is highly adaptable to intermittent operating conditions. Additionally, a number of papers (see, e.g., \cite{GoTo19,MaKa20,NaLi2021}) have proposed data-driven approaches that use measurement data to directly perform controller synthesis without system identification.  
The use of linear estimation techniques to learn sensitivities of various power system state variables to control inputs has also  been exploited in numerous applications in bulk power system monitoring and control   \cite{ChDo14,HoDo16,ZhDh20}.

 The framework proposed in this paper is closely related to that presented in \cite{Pic22}, which  proposes a model-free, real-time optimal power flow solver through feedback optimization. Similar  to the approach we adopt here,   \cite{Pic22} proposes  a  data-driven online approach to learn  a  model of input–output sensitivities. The objective function of the online feedback optimization formulated is the operational cost of the inputs, and the inequality constraints are the operational limitations on the inputs. The sensitivity estimate at each iteration is computed using both the previous estimate and the current measurements only, and  a projected gradient solution method for the optimization problem, with persistent excitation of the control inputs performed at each iteration. By contrast, in our work, we  formulate the control problem  as a stochastic optimization problem, where the objective function is the expected value of the norm of the deviation of the outputs from their nominal values. In addition, the sensitivity estimate in our work is computed using all previous measurements with less weights assigned to older measurements. Finally, while the proposed scheme in \cite{Pic22} does not  account for the constraints on  control inputs when  attempting to generate persistently exciting inputs,   our work does take this important issue into account.

The remainder of this paper is organized as follows. We begin by formulating the voltage regulation problem in Section~\ref{sec:problemFormulation}. In order to solve such problem, Section~\ref{MBFO} describes a  model-based feedback optimization controller that relies on  sensitivities of voltage deviations to reactive power injections; such sensitivities are learnt online via a rLSE estimator described  in \Cref{sec:onlineEstimator}. Then, our proposed   controller, which is based on combining the ideas in Section~\ref{MBFO} and   \Cref{sec:onlineEstimator}, is presented in   \Cref{sec:feedbackOptimization}.  In \Cref{sec:stabilityAnalysis}, we present the main results of our convergence   analysis, whereas in   \Cref{sec:numericalResults}, we provide numerical simulation results  that demonstrate the effectiveness of our control scheme. Concluding remarks are given in \Cref{sec:concludingRemarks}.

\section{Problem Formulation}
\label{sec:problemFormulation}
Consider a power distribution network with $n+1$ buses indexed by the elements in $\mathcal{N}=\{0,1,2,\dots,n\}$, where the~$0$ element corresponds to  the bus at which the network is connected to an external system, e.g., a sub-transmission grid. Assume the network has $m$ reactive-power-capable DERs indexed by the elements in $\mathcal{C}=\{1,2\dots,m\}$. Let $v_i(t)$ denote the magnitude of the phasor associated with the voltage at bus~$i,~i \in \mathcal{N}$, at time $t$. Assume that $v_0(t)=V_0$, where $V_0$ is a positive constant, for all $t \geq 0$, and define $v(t):=\big[v_1(t),v_2(t),\dots,v_n(t) \big]^\top$. Also, let $q_i(t)$ denote the reactive power injected into the network by reactive-power capable DER~$i,~i \in \mathcal{C}=\{1,2\dots,m\}$, at time $t$, and define $q(t):=\big[q_1(t),q_2(t),\dots,q_m(t) \big ]^\top$.

Assume that, initially, $q(t_0)=q_0$, where $q_0 \in \mathbb{R}^m$ is given. Then, the objective   is to adjust $q(t_k)=:q_{k},~k \geq 1,$ via a feedback controller so as to regulate $v(t)$ to some  $v^*=\big[v_1^*, v_2^*, \dots, v_n^* \big]^\top,$ where $v_i^*>0,~i=1,2,\dots,n,$ denotes some nominal value. Let $\{t_k^+\}_{k \geq 1}$, $t_k < t_k^+ < t_{k+1}$, denote the sequence of time instants at which $v(t)$ is measured, and define $v_{k}:=v(t_k^+)$.   Then, we have that
\begin{align}
v_{k}=h\big(q_{k}, w_{k}\big), \label{input_output_model}
\end{align}
where $w_{k} =\big[w_{1,k},w_{2,k},\dots,w_{d,k} \big]^\top$, with {$w_{j,k}:=w_j(t_k),~j=1,2,\dots,d,$} representing  exogenous disturbances associated with, e.g., DER active power generation, load active and reactive power demand, and  network parameters.
\begin{remark} \label{remark_1}
In general,  it is difficult to analytically characterize the function  $h(\cdot,\cdot)$ as this essentially entails obtaining an analytical solution to the power flow problem, which is defined by a set of nonlinear equations describing the active and reactive power balance at each bus (see, e.g., \cite[pp.~323-330]{bergen2000power}). To be more specific, these equations  map the magnitudes and phase angles of the phasors associated with bus voltages to the active and reactive power injections at all buses; hence one would need to invert such mapping to analytically characterize $h(\cdot,\cdot)$.
{Therefore, in the remainder we will assume that $h(\cdot,\cdot)$ is not known.}\end{remark}

Assume that values taken by $w_k,~k \geq 1$, are of the form
\begin{align}
    w_k=w_k^\circ+\xi_k,
\end{align}
with the value taken by  $w_k^\circ$ known and  slowly changing as~$k$ evolves, and where  $\xi_k$ is not a priori known but can  be  described by a random vector $\Xi_k \in \mathbb{R}^d$.  In the remainder, we assume that the random vectors $\Xi_k,~k \geq 0,$ are independent and identically distributed (i.i.d.) with zero mean,  i.e., 
\begin{align}
    \text{E}\big[\Xi_k\big]=0,\quad k \geq 1,
\end{align}
and whose covariance  matrix is such that  
\begin{align}
    \text{E} \big[\|\Xi_k \|_2^2 \big] \leq \sigma,
\end{align}
where $\sigma$ denotes some positive constant. Now, since the values taken by $w_k^\circ,~k \geq 1$, are known, we can equivalently describe the relation in \eqref{input_output_model} as follows:
\begin{align}
    v_k=h_k(q_k,\xi_k), \label{input_output_model_2}
\end{align}
where {$h_k(q_k,\xi_k) \coloneqq h(q_{k}, w_k^\circ+\xi_k)$}.
 In the remainder, we will assume $q_0$ is such that
\begin{align}
v_0&=h(q_0,w_0^\circ) \nonumber \\
& =  v^*,
\end{align}
with the value taken by $w_0^\circ$ known, 
i.e., initially there is no uncertainty in the value that the exogenous disturbances takes, and the reactive powers injected by the DERs  are such that the magnitudes of all bus voltages  are equal to their nominal values.


\section{Model-Based Feedback Optimization} \label{MBFO}

In order to determine the value of $q_{k},~k \geq 1$,  consider  the following optimization problem:
\begin{subequations}  \label{model_based_opt}
\begin{align} 
\underset{\varphi_{k}}{\mbox{minimize}} & \quad \frac{1}{2} \text{E}_{\Xi_k} \bigg [\big \| v^*  - h_k\big(\varphi_{k},\Xi_{k}\big) \big \|_2^2  \bigg ]    \\
\mbox{subject to} & \quad \underline{ q}_{k} \leq \varphi \leq \overline{ q}_{k},
\end{align}
\end{subequations}
where $\text{E}_{\Xi_k}[\cdot]$ denotes  expectation over the distribution of $\Xi_k${, and $\underline{ q}_{k}$ and $\overline{ q}_{k}$ denote the lower and upper bounds, respectively, on the reactive power that can be injected by DERs into the network.}
Then,  a  local minimum of \eqref{model_based_opt}, which we denote by  $\varphi^*_{k}$, can be approximately obtained as $\varphi^*_{k} \approx \lim_{r \to \infty} \varphi^r_{k}$, with the evolution of $\varphi^r_{k},$ governed by
\begin{align}
\varphi^{r+1}_{k} = \Bigg [ \varphi^r_{k}+ & \gamma_k  \Bigg ( \frac{\partial h_k(\varphi, \xi)}{\partial \varphi} \Bigg |_{   \varphi =\varphi^r_{k}, ~  \xi =\xi_{k}^r } \Bigg)^\top     \nonumber \\
& \times
\Big( v^*  - h_k\big(\varphi^r_{k},\xi_{k}^r \big)  \Big)  \Bigg ]_{\underline{ q}_{k}}^{\overline{ q}_{k}}, \quad r \geq 0, \label{PSGD_alg}
\end{align}
where $\gamma_k$ is a positive constant,  and $\xi_k^r,~r \geq 0,$ denote  samples  from the distribution of $\Xi_k$. The algorithm in \eqref{PSGD_alg} is essentially a PSGD  algorithm  for the problem in \eqref{model_based_opt} (see, e.g., \cite{kushner2003stochastic} and the references therein).

  We initialize the PSGD algorithm in \eqref{PSGD_alg}  as follows
\begin{align}
\varphi^0_{k} = q_{k-1}, \nonumber \\
\xi_k^0=\xi_{k-1},
\end{align}
where $\xi_{k-1}$ denotes the realized value of $\Xi_{k-1}$, 
and instead  of running it to completion, we will only execute one iteration and set $q_{k}=\varphi^1_{k}$. Then, it follows that the evolution of $q_{k}$ is governed by
\begin{align}
q_{k}=  \Bigg [q_{k-1}+  & \gamma_k  \Bigg ( \frac{\partial h_k(\varphi, \xi)}{\partial \varphi} \Bigg |_{ \varphi=q_{k-1}, ~ \xi=\xi_{k-1}  } \Bigg)^\top \nonumber \\
& \times \Big( v^*  - h_k\big(q_{k-1},\xi_{k-1} \big)  \Big)  \Bigg ]_{\underline{ q}_{k}}^{\overline{ q}_{k}}.  \label{one_step_PSGD_alg} \end{align}
The one-step PSGD-based algorithm proposed  in \eqref{one_step_PSGD_alg} for solving the problem in \eqref{model_based_opt} is a special case of the setting in \cite{Nedich_2019}, which proposes a framework to sequentially solve stochastic  minimization problems with slowly varying cost functions that are convex. 
\begin{remark}
{We assume for subsequent developments that the problem \eqref{model_based_opt} changes slowly with $k$, while the one-step PSGD-based algorithm is executed, which entails assuming that the $h_k(\cdot,\cdot)$'s, $\underline{ q}_{k}$ and $\overline{ q}_{k}$ vary slowly with $k$. However, the algorithm will also work in the presence of large but infrequent variations due to, e.g., weather changes, as long as there are sufficiently long time intervals during which the algorithm is able to converge.}
\end{remark}

Note that by using $\xi_{k-1}$ to initialize the PSGD-based algorithm,  we are using a sample from $\Xi_{k-1}$;  however, recall that we have assumed  that the $\Xi_l$'s are i.i.d., so  we are effectively   sampling from  the distribution of $\Xi_k$. Then, since we have assumed that $h_k(\cdot,\cdot)$ changes slowly with $k$, we can make the following approximations:
\begin{align}
    h_k\big(q_{k-1},\xi_{k-1}\big) &\approx  h_{k-1}\big(q_{k-1},\xi_{k-1}\big) \nonumber \\
    & = v_{k-1}, \label{approx_1}  \\ 
 \frac{\partial h_k(\varphi, \xi)}{\partial \varphi} \Bigg |_{ \varphi=q_{k-1}, ~ \xi=\xi_{k-1}}  & \approx \frac{\partial h_{k-1}(\varphi, \xi)}{\partial \varphi} \Bigg |_{ \varphi=q_{k-1},~   \xi=\xi_{k-1}}. \label{approx_2}
\end{align}
Thus,  
if we assume measurements of  $v_{k},~k \geq 1,$ are available,  we can update the value of $q_{k}$ as follows:
\begin{align}
q_{k}=  \Big [q_{k-1}+  \gamma_k   S_{k-1} ^\top   \big( v^*  - v_{k-1} \big)   \Big ]_{\underline{ q}_{k}}^{\overline{ q}_{k}}, \label{model_based_controller}
\end{align}
where
\begin{align}
 S_{k-1}= \frac{\partial h_{k-1}(\varphi, \xi)}{\partial \varphi} \Bigg |_{ \varphi=q_{k-1}, ~ \xi=\xi_{k-1}} \in \mathbb{R}^{n \times m}. \label{sensitivity_matrix_k_1}
 \end{align}
 
Note that in order to execute \eqref{model_based_controller}, we would need to compute $\partial h_{k-1}(\varphi,\xi)/\partial \varphi$ for $\varphi=q_{k-1}$ and $\xi=\xi_{k-1}$. This computation can be done by manipulating the power flow Jacobian without necessarily solving the power flow equations; however, it requires knowing the value of $\xi_{k-1}$, which we have assumed  it is not a priori known.  To circumvent this issue, instead of using $S_{k-1}$ in \eqref{model_based_controller}, we will use an estimate obtained using measurements of   $\big \{q_{l}, v_{l} \big \}_{l=0}^{k-1}$; the construction of such an estimate is detailed next.

\section{Online Matrix Sensitivity Estimator}
\label{sec:onlineEstimator}
Define $\Delta v_{k} : = v_{k}-v_{k-1}$,  $\Delta q_{k} : = q_{k}-q_{k-1}$ and $\Delta \xi_{k} : = \xi_{k}-\xi_{k-1}$; then, by using   \eqref{input_output_model_2}, we have that
\begin{align}
v_{k-1}+\Delta v_{k} = h_k\big(q_{k-1}+\Delta q_{k}, \xi_{k-1} +\Delta \xi_{k} \big), \label{input_outoput_model_2}
\end{align}
Now,  by expanding the right-hand side of \eqref{input_outoput_model_2} about $\big( q_{k-1}, \xi_{k-1} \big)$ using the Taylor series expansion, and using the approximation in \eqref{approx_2}, 
it follows that
\begin{align}\label{model}
\Delta v_{k} = S_{k-1}\Delta q_{k} + \epsilon_{k},
\end{align}
where $\epsilon_{k}$ represents higher order terms in $\Delta q_{k}$,  all the terms in $\Delta \xi_{k}$, and the error associated with  the use of \eqref{approx_2}. Then, by assuming that  $\epsilon_{k}$ is much smaller that $S_{k-1}\Delta q_{k} $, we have that 
\begin{align}
\Delta v_{k} \approx S_{k-1}\Delta q_{k}.
\end{align}
 
Now let $\widehat{S}^*_{k}$ denote an estimate of $S_{k}$, which  we can obtain  using the method of   least squares as follows:
\begin{align}\label{ls_problem}
\widehat{S}_{k}^* = \underset{S}{\argmin} ~\ell(S),
\end{align}
where
\begin{equation*}
    \ell(S) := \sum_{l=1}^{k}\lambda^{k-l}\big\|\Delta v_{l} - S\Delta q_{l}\big\|_2^2,
\end{equation*}
with $\lambda \in (0,1)$ denoting the forgetting factor that allows the estimator to assign exponentially less weight to older measurements and adapt to changes in operating conditions. The solution to the least-squares problem \eqref{ls_problem} can be derived as follows. We first compute the gradient of the loss function $\ell(S)$ in \eqref{ls_problem}, which yields the following expression:
\begin{align}\label{ls_grad}
    \nabla \ell(S) = -2\sum_{l=1}^k\lambda^{k-l}\big(\Delta v_{l} - S\Delta q_{l}\big)\Delta q_{l}^\top.
\end{align}
Then, we equate the gradient \eqref{ls_grad} to zero and obtain
\begin{align}\label{ls_grad2}
    \sum_{l=1}^k\lambda^{k-l}\Delta v_{l}\Delta q_{l}^\top = S\left(\sum_{l=1}^k\lambda^{k-l}\Delta q_{l}\Delta q_{l}^\top\right).
\end{align}
Now in order to solve for $S$ in \eqref{ls_grad2}, we need to invert the matrix that multiplies  $S$. Since this matrix is the sum of  $k$ $(m \times m)$-dimensional rank-one matrices, sufficient conditions for ensuring its invertibility are that: i) $k \geq m$,  and ii) the sequence $\{\Delta q_{l} \}_{l=1}^k$ is persistently exciting.\footnote{A discrete-time signal $x[t]$, $t=1,2,\dots,$ is persistently exciting if, for every $k$, there exist an integer $l$ and constants $\varrho_1,\varrho_2>0$ such that the matrices
$\varrho_1I-\sum_{t=k}^{k+l} x[t]x[t]^\top$ and $\sum_{t=k}^{k+l} x[t]x[t]^\top - \varrho_2I$ are positive definite \cite{AW:08}.}  Assuming these two conditions are satisfied, we can now solve for $S$ in \eqref{ls_grad2}  and obtain $\widehat{S}_{k}^*,~k \geq m$: 
\begin{align}\label{ls_sol}
    \widehat{S}_{k}^*&=\left(\sum_{l=1}^k\lambda^{k-l}\Delta v_{l}\Delta q_{l}^\top\right) \Bigg(\sum_{l=1}^k\lambda^{k-l}\Delta q_{l}\Delta q_{l}^\top\Bigg)^{-1}.
\end{align}

The issue of ensuring that the sequence $\{\Delta q_{l} \}_{l=1}^k$ is persistently exciting is addressed in the next section. However, even if this can be addressed satisfactorily, the estimate $\widehat{S}_{k}^*$ obtained using \eqref{ls_sol} is only valid for $k \geq m$; this means that in practice we would have to  wait for $k$ steps before we can obtain our first estimate. 
To address this, we use an algorithm that will recursively generate a sequence $\big \{\widehat{S}_k \big \}_{k \geq 1}$ that can be shown to converge  to $\widehat{S}_{k}^*$ for $k$ large enough. The update rules for this recursive algorithm are as follows:
\begin{align}
\widehat{S}_{k}  =&~\widehat{S}_{k-1} + \Big (\Delta v_{k} - \widehat{S}_{k-1}  \Delta q_{k}    \Big) \Delta q_{k}^\top F_k, \nonumber \\
F_{k}   =&~ \lambda^{-1}  F_{k-1} \nonumber \\
&-\frac{\lambda^{-2}}{1+ \lambda^{-1} \Delta q_{k} ^\top F_{k-1} \Delta q _{k}}   F_{k-1} \Delta q _k \Delta q^\top _k  F_{k-1}, \label{sensitivity_update_eqns}
\end{align}
with
\begin{align}
  \widehat{S}_0 &= \frac{\partial h (\varphi, w)}{\partial \varphi} \Bigg |_{ \varphi=q_{0}, ~  w=w_0^\circ}, \nonumber \\
  F_0&=I_m, \label{init}
\end{align}
where $I_m$ denotes the $(m \times m)$-dimensional identity matrix. 
The algorithm is obtained by using the matrix inversion lemma to recursively invert the matrix $\sum_{l=1}^k\lambda^{k-l}\Delta q_{l}\Delta q_{l}^\top$; see Appendix~\ref{rLSE_derivaiton} for the derivation. 
\begin{remark}
Note that in order to initialize the algorithm in \eqref{sensitivity_update_eqns}, we need to compute $\partial h(\varphi,w)/\partial \varphi$ for $\varphi=q_0$ and $w=w_0^\circ$. Unlikely the computation of $S_{k-1}$ in \eqref{sensitivity_matrix_k_1}, in this case, we can indeed compute $\widehat{S}_0$ by manipulating the power flow Jacobian without necessarily solving the power flow equations because   $q_0$ and $w_0^\circ$ are known. Furthermore, the initialization of $\widehat{S}_0$ as in \eqref{init} is not crucial as we will see in the numerical simulation results presented in Section~\ref{sec:numericalResults}.
\end{remark}
\begin{remark}
Note that if the algorithm in  \eqref{sensitivity_update_eqns} were to be executed for $k > m$, with $\widehat{S}_m = \widehat{S}_m^*$  obtained using \eqref{ls_sol} for $k=m$, and 
$$F_{m}=\Bigg(\sum_{l=1}^m\lambda^{k-l}\Delta q_{l}\Delta q_{l}^\top\Bigg)^{-1},$$
the sequence generated by the algorithm would be $\big \{\widehat{S}_l^* \big \}_{l \geq m} $, i.e., for each $k >m$, the algorithm would generate the exact solution to \eqref{ls_grad2} as given in \eqref{ls_sol}.
\end{remark}

\section{Online Feedback Optimization}
\label{sec:feedbackOptimization}
Now, on the one hand, by replacing $S_{k-1}$ by $\widehat{S}_{k-1}$ in \eqref{model_based_controller}, we obtain that 
\begin{align}
q_{k}=  \Big [q_{k-1}+  \gamma_k   \widehat{S}_{k-1}    ^\top \big( v^*  - v_{k-1} \big)    \Big ]_{\underline{ q}_{k}}^{\overline{ q}_{k}}, \quad k \geq 1. \label{online_based_controller}
\end{align}
On the other hand, by inspecting \eqref{ls_sol}, one can see that  an estimate of  $\widehat{S}_{k}$ can be computed if the matrix $$ \sum_{l=1}^k\lambda^{k-l}\Delta q_{l}\Delta q_{l}^\top$$ is invertible; this can be ensured as mentioned earlier if the sequence $\big \{\Delta q_{l} \big \}_{l=1}^{k}$ is persistently exciting. However, it is not clear a priori that   \eqref{online_based_controller} will generate such persistently exciting sequence. To address this issue, we modify \eqref{online_based_controller} to add a mechanism to ensure that indeed the sequence $\big \{\Delta q_{l} \big \}_{l=1}^{k}$ is persistently exciting. First, by assuming that $q_{k-1} \in [\underline{q}_{k-1},\overline{q}_{k-1} ]$, we can rewrite \eqref{online_based_controller} as follows:
\begin{align} \label{modified_online_based_controller_1}
q_{k}=  q_{k-1}+  \Big [ \gamma_k   \widehat{S}_{k-1}    ^\top  \big (v^*  - v_{k-1} \big )    \Big ]_{\underline{  \Delta  q}_{k}}^{\overline{ \Delta   q}_{k}}, \quad k \geq 1, 
\end{align}
where $\underline{ \Delta  q}_{k}=  \underline{q}_{k} - q_{k-1}$ and $\overline{ \Delta  q}_{k}=  \overline{q}_{k} - q_{k-1}$. Then, following the ideas in \cite{MadiADG2021}, we will make three modifications to \eqref{modified_online_based_controller_1} as follows:
\begin{enumerate}
    \item[\textbf{M1.}]
The term inside the projection term on the right hand side of \eqref{modified_online_based_controller_1} is modified as follows:
\begin{align}
 \gamma_k   \widehat{S}_{k-1}    ^\top \big( \Delta  v^*_{k}   + cz_k    \big),
\end{align}
where $c$ is some constant, $\Delta v^*_{k}=v^*  - v_{k-1}$,   and
\begin{equation}\label{w_def}
z_k=
\begin{cases}
0,~\text{if}~\{\Delta v^*_{l}\}_{l=1}^k~\text{is persistently exciting},\\
\text{sampled from}~(-\Delta v^*_{k})U(0,a_1),~\text{otherwise},
\end{cases}
\end{equation}
with $a_1\in(0,1)$ and  $U(x,y)$ denoting the continuous uniform distribution over the interval $(x,y)$. 
    \item[\textbf{M2.}]The incremental lower and upper capacity limit in \eqref{modified_online_based_controller_1} are respectively set to
\begin{align}
\underline{q}_{k}+\underline{\eta}_{k}, \nonumber \\
\overline{q}_{k}-\overline{\eta}_{k}, \label{incremental_limits_modified}
\end{align}
where  $\underline{\eta}_{k}$ is sampled from $U(0,a_2|\underline{\Delta q}_{k}|)$ and $\overline{\eta}_{k}$ is sampled from $U(0,a_2|\overline{\Delta q}_{k}|)$, with $a_2\in(0,1)$. 
    \item[\textbf{M3.}]
By using the two modifications above, 
define the following quantity:
 \begin{align}
 \widetilde{ \Delta q}_{k}=  \Big [ \gamma_k   \widehat{S}_{k-1}    ^\top  \big( \Delta  v^*_{k}   + cz_k    \big)    \Big ]_{\underline{  \Delta  q}_{k} +\underline{\eta}_{k} }^{\overline{ \Delta   q}_{k} -\overline{\eta}_{k}}. \label{updated_limits}
 \end{align}
Let $\mbox{Null}\big(\widehat{S}_{k}\big)$ denote the null space of the matrix $\widehat{S}_{k}$. 
Then, if the sequence $\Big \{ \big \{\Delta q_{l} \big \}_{l=1}^{k-1},  \widetilde{ \Delta q}_{k}  \Big \}$ is persistently exciting, we update the value of $q_{k}$ as follows:
\begin{align}
q_{k}=q_{k-1}+ \widetilde{ \Delta q}_{k}, \quad k \geq 1,  \label{q_update_final_1}
\end{align}
otherwise we update its value as follows:
\begin{align}
q_{k}=q_{k-1}+ \widetilde{ \Delta q}_{k} +  \gamma_k \alpha_{k} \nu_{k},  \quad k \geq 1, \label{q_update_final_2}
\end{align}
with $\alpha_{k}$   sampled from $U(-b_k,b_k)$, where $b_k\geq 0$ is arbitrarily chosen so that
\begin{align}
 \underline{ \Delta  q}_{k}  &\leq \widetilde{ \Delta q}_{k} + \gamma_k b_{k}\nu_{k} \leq  \overline{ \Delta  q}_{k}, \label{sens_null_space_perturb1_aux}\\
 \underline{ \Delta  q}_{k}  &\leq \widetilde{ \Delta q}_{k} - \gamma_k \alpha_{k}\nu_{k} \leq  \overline{ \Delta  q}_{k}, 
 \label{sens_null_space_perturb2_aux}
\end{align}
for some   arbitrarily chosen $\nu_k    \in\mbox{Null}\big(\widehat{S}_{k}\big)$; 
this ensures that $\underline{ q}_{k} \leq q_{k} \leq  \overline{ q}_{k}$.
\end{enumerate}

The  rationale behind the modifications above is to introduce excitation across the  entire space in which the $\Delta q_k$'s take values. To see this, first  consider the case when the capacity constraints are not active at instant $k$, i.e.,  $
 \widetilde{ \Delta q}_{k}=   \gamma_k   \widehat{S}_{k-1}    ^\top  \big( \Delta  v^*_{k}   + cz_k    \big).$  Let $\mbox{Row}\big(\widehat{S}_{k}\big)$ denote the row space of  $\widehat{S}_{k} \in \mathbb{R}^{n \times m}$. Then, the idea behind adding the term $cz_k$ to $\Delta v_k^*$ in Modification~M1 is to introduce excitation in the subspace spanned by the columns of $\widehat{S}^\top_{k},$ i.e., $\mbox{Row}\big(\widehat{S}_{k}\big)$, 
 whereas the idea behind adding the term $\gamma_k b_{k}\nu_{k}$ in Modification~M3 is to introduce excitation in $\mbox{Null}\big(\widehat{S}_{k}\big)$, which is the orthogonal complement of $\mbox{Row}\big(\widehat{S}_{k}\big)$. Then, since for any $x \in \mathbb{R}^m$ we have that $x=u+v,$ with $u \in \mbox{Row}\big(\widehat{S}_{k}\big)$ and  $v \in \mbox{Null}\big(\widehat{S}_{k}\big)$, this mechanism allows us to introduce excitation in separate parts of the whole space $\mathbb{R}^m$ as needed. The idea behind modifying the incremental capacity limits as described in Modification~M2 is as follows. If the capacity constraints are active at instant $k$, by modifying the incremental capacity limits as in \eqref{incremental_limits_modified}, we are leaving headroom to add the term $\gamma_k b_{k}\nu_{k}$ if needed,  while still ensuring that the constraints in \eqref{sens_null_space_perturb1_aux} -- \eqref{sens_null_space_perturb2_aux} imposed by the actual incremental capacity limits are satisfied.


\section{Convergence Analysis} \label{sec:stabilityAnalysis}
Next, we provide  the conditions under which the 
proposed online feedback optimization controller in \eqref{updated_limits} -- \eqref{sens_null_space_perturb2_aux} converges. To this end, we first need to rewrite the model in \eqref{input_output_model} as follows.   Define $\delta v_k=v_k-v^*$, $\delta q_k=q_k-q_0$, and   $\delta w_k=w_k-w_0^\circ$ (note that $w_k-w_0^\circ=w_k^\circ-w_0^\circ+\xi_k$); then,   we have that
 \begin{align}
 v^*+\delta v_k&=h(q_0 +\delta q_k,w_0^\circ+\delta w_k), \quad k \geq 0. \label{input_output_model_5}
 \end{align}
Now by  using the Taylor series expansion  to expand the right hand side of \eqref{input_output_model_5} about $(q_0,w_0^\circ)$, we obtain that
 \begin{align}
     v_k= S_\varphi q_k+S_w \xi_k+\mu_k+\eta_k,\label{power_system}
 \end{align}
 where 
  \begin{align}
     S_\varphi&=\frac{\partial h (\varphi,w)}{\partial \varphi} \Bigg |_{ \varphi=q_{0}, ~ w=w_0^\circ}  \nonumber \\
     S_w&=\frac{\partial h (\varphi,w)}{\partial w} \Bigg |_{ \varphi=q_{0}, ~ w=w_0^\circ}  \nonumber \\
 \mu_k&=v^*-S_\varphi q_0+S_w \big (w_k^\circ-w_0^\circ\big), 
 \end{align}
with $\eta_k$ representing higher-order terms in $\delta q_k$ and   $\delta w_k$.

The next result, which is established using standard analysis techniques (see, e.g., \cite{doi:10.1137/070704277}),  shows that the sequence $\{q_k\}_{k \geq 1}$ generated by \eqref{w_def} -- \eqref{sens_null_space_perturb2_aux} converges almost surely to a solution of the optimization problem in \eqref{model_based_opt} provided that the sensitivity estimates, $\big \{\widehat{S}_{k} \big \}_{k \geq 1}$, are unbiased. In doing so, we set $c=0$ in \eqref{updated_limits}. Also, we  assume that (i) $\mu_k=\mu,~k \geq 0,$ where $\mu$ is some constant, which is consistent with the assumption we made earlier that $w_k^\circ$ slowly changes with $k$; and (ii) $\eta_k=0,~k \geq 0$, which is reasonable since this term captures the higher-order terms of the Taylor series expansion of $h(q,w)$ about $(q_0,w_0^\circ)$. We plan to address these   issues in future work.

\begin{prop}\label{prop:convergence_results} 
Consider the online feedback optimization controller in \eqref{w_def} -- \eqref{sens_null_space_perturb2_aux} with  $c=0$,  and  where the sequence $\big \{\widehat{S}_{k} \big \}_{k \geq 0}$ are the unbiased estimates of the sensitivity   matrix $S_\varphi$, namely, $\text{E}\Big [\widehat{S}_{k}-S_\varphi \,| \, \mathcal{F}_{k}\Big ] = 0$, with $\mathcal{F}_{k}$ denoting the accumulated collection of states up to instant $k$, $\big \{(v_l,q_l) \big \}_{l=0}^{k}$, and $\text{E}\left[\big \|\widehat{S}_{k}-S_\varphi \big \|^2\right]\leq \sigma_\varphi$, with $\sigma_\varphi$ denoting some positive constant.
Let $\mathcal{X}_k^*$ denote the set    of optimal solutions of \eqref{model_based_opt}. We assume that the following conditions hold:

{(a) $\underline{q}_k$ and  $\overline{q}_k$ are constant, $\mu_k$ is constant, $\text{E}\big[\eta_k\big]=0$ and $\text{E}\big[\|\eta_k\|^2_2\big]\leq \sigma$.}

(b) $\gamma_k$ is a diminishing step size, namely, 
\begin{equation}
    \sum_{k=1}^{\infty} \gamma_k = \infty, \quad \sum_{k=1}^{\infty} \gamma_k^2 < \infty.
\end{equation}
Then,  the sequence $\{q_k\}_{k \geq 1}$ converges almost surely to some point in $\mathcal{X}_k^*$.
\end{prop}

\begin{proof}
Define \begin{align*}
    f(\varphi) &:= \frac{1}{2} \text{E}_{\Xi_k} \left[\big \| v^*  - h_k\big(\varphi,\Xi_{k}\big) \big \|_2^2\right],\\
    g_{k} &:= \frac{\partial f(\varphi)}{\partial \varphi} \Bigg |_{ \varphi=q_{k}} = -S_{\varphi}^\top(v^*-S_{\varphi}q_{k}-\mu_{k}),
\end{align*}
$\delta_{k} := (S_{\varphi}-\widehat{S}_{k})^\top \Delta  v^*_{k+1}$; then,  it follows from \eqref{updated_limits} -- \eqref{q_update_final_2} that
\begin{align}
q_{k+1} &= \Big[q_{k} + \gamma_{k+1}\widehat{S}_{k}^\top\Delta  v^*_{k+1}+\gamma_{k+1}\alpha_{k+1}\nu_{k+1}\Big]_{\underline{q}_{k+1} +\underline{\eta}_{k+1} }^{\overline{ q}_{k+1} -\overline{\eta}_{k+1}}\nonumber\\
    &=\Big[q_{k} + \gamma_{k+1}\big(S_{\varphi}+(\widehat{S}_{k}-S_{\varphi})\big)^\top\Delta  v^*_{k+1}\nonumber\\&\quad+\gamma_{k+1}\alpha_{k+1}\nu_{k+1}\Big]_{\underline{q}_{k+1} +\underline{\eta}_{k+1}  }^{\overline{ q}_{k+1} -\overline{\eta}_{k+1}}\nonumber\\
    &= \Big[q_{k} -  \gamma_{k+1}({g_{k}}+S_{\varphi}^\top S_w{\xi_{k}}+S_{\varphi}^\top \eta_{k} \nonumber \\ &\quad+\delta_{k}-\alpha_{k+1}\nu_{k+1})\Big]_{\underline{q}_{k+1} +\underline{\eta}_{k+1} }^{\overline{ q}_{k+1} -\overline{\eta}_{k+1}}.
\end{align} 
Define $\zeta_k := S_{\varphi}^\top S_w\xi_{k}+S_{\varphi}^\top\eta_k +\delta_{k}-\alpha_{k+1}\nu_{k+1}$; then, by   using the non-expansiveness property of the Euclidean projection operator, we have that
\begin{align}
    \|q_{k+1}-q^*\|^2&\leq \big\|q_{k}-q^* - \gamma_{k+1}\big(g_{k}+\zeta_{k}\big)\big\|^2\nonumber\\
    &= \|q_{k}-q^*\|^2 - 2\gamma_{k+1}\big(g_{k}+\zeta_{k}\big)^\top (q_{k} - q^*)\nonumber\\&\quad + \gamma_{k+1}^2\|g_{k}+\zeta_{k}\|^2.
    \label{main_ineq1}
\end{align}
It follows from the convexity property that
\begin{align}
    -g_{k}^\top (q_{k}-q^*) \leq f(q^*) - f(q_{k})\label{grad_ineq}.
\end{align}
By applying \eqref{grad_ineq} to \eqref{main_ineq1}, we obtain that
\begin{align}
    \|q_{k+1}-q^*\|^2 
    &\leq\|q_{k}-q^*\|^2 -2\gamma_{k+1}(f(q_{k}) - f(q^*))\nonumber\\&\quad-2\gamma_{k+1}\zeta_{k}^\top (q_{k} - q^*) + \gamma_{k+1}^2\|g_{k}+\zeta_{k}\|^2.
    \label{main_ineq2}
\end{align}
Define \[g^* := \frac{\partial f(\varphi)}{\partial \varphi} \Bigg |_{ \varphi=q^*};\]
then, the following Lipschitz condition holds trivially for some $L>0$:
\begin{align}
    \|g_{k}-g^*\| = \|S_{\varphi}^\top S_{\varphi}(q_{k}-q^*)\| \leq L\|(q_{k}-q^*)\|.
    \label{Lipschitz}
\end{align}
By using \eqref{Lipschitz} and the fact that $2x^\top y\leq x^2+y^2$, for any $x,y\in\mathds{R}^n$, we have that
\begin{align}
    \|g_{k}+\zeta_{k}\|^2 &= \big\|g_{k}-g^*+g^*+\zeta_{k}\big\|^2\nonumber\\
    &\leq 2\|g_{k}-g^*\|^2+2\|g^*+\zeta_{k}\|^2\nonumber\\
    &\leq 2L^2\|q_{k}-q^*\|^2+2\|g^*+\zeta_{k}\|^2.
    \label{lip_ineq}
\end{align}
Since $\text{E}\big[\delta_{k}\,|\,\mathcal{F}_{k}\big] = 0$, $\text{E}\big [S_{\varphi}^\top S_w\xi_{k} \,| \, \mathcal{F}_{k} \big ] = 0$, $\text{E}\big [S_{\varphi}^\top \eta_{k} \,| \, \mathcal{F}_{k} \big ] = 0$, and $\text{E}\big [\alpha_{k+1}\nu_{k+1}\,|\,\mathcal{F}_k\big ] = 0$, we have that
\begin{align}
    \text{E}\big [\zeta_{k}\,|\,\mathcal{F}_{k}\big ] &= \text{E}\big [S_{\varphi}^\top S_w\xi_{k}+S_{\varphi}^\top\eta_k +\delta_{k}-\alpha_{k+1}\nu_{k+1}\,|\,\mathcal{F}_{k}\big ] \nonumber\\&= 0.\label{zeta_ineq}
\end{align}
Then, by taking an expectation of \eqref{main_ineq2} and applying \eqref{lip_ineq} and \eqref{zeta_ineq}, we obtain that
\begin{align}
    \text{E}\left[\|q_{k+1}-q^*\|^2\,|\,\mathcal{F}_{k}\right]
    &\leq(1+2L^2\gamma_{k+1}^2)\|q_{k}-q^*\|^2\nonumber\\
    &\quad -2\gamma_{k+1}(f(q_{k}) - f(q^*))\nonumber\\
    &\quad -2\gamma_{k+1} \text{E}[\zeta_{k}\,|\,\mathcal{F}_{k}]^\top (q_{k} - q^*)\nonumber\\
    &\quad+2\gamma_{k+1}^2 \text{E}\left[\|g^*+\zeta_{k}\|^2\,|\,\mathcal{F}_{k}\right])\nonumber\\
    &=(1+2L^2\gamma_{k+1}^2)\|q_{k}-q^*\|^2\nonumber\\
    &\quad  -2\gamma_{k+1}(f(q_{k}) - f(q^*))\nonumber\\&\quad+2\gamma_{k+1}^2 \text{E}\left[\|g^*+\zeta_{k}\|^2\,|\,\mathcal{F}_{k}\right]).
    \label{main_ineq3}
\end{align}
Further, it can be easily shown that
\begin{align}\text{E}\left[\|g^*+\zeta_{k}\|^2 \, |\, \mathcal{F}_{k}\right]<\infty. \label{noise_bound}
\end{align}
By applying the Robbins-Siegmund Theorem (see, e.g., \cite[Lemma~11]{Polyak}) to \eqref{main_ineq3}, we conclude that $q_k$ converges almost surely to some point in $\mathcal{X}_k^*$.
\end{proof}

The next result provides the cost error bound at the time instant when the sequence $\{\Delta q_k\}_{k \geq 1}$ is no longer persistently exciting. This indeed will be the case if the rLSE in \eqref{sensitivity_update_eqns} is used to generate the sequence $\{\widehat{S}_{k}\}_{k \geq 1}$ as $\widehat{S}_{k}$ will cease to be an unbiased estimate of the sensitivity matrix, $S_\varphi$, as $k \to \infty$.

\begin{prop}
Suppose we have for some $\alpha$ and $T>0$ that 
\begin{align}
\sum_{l=1}^T\lambda^{T-l}\Delta q_{l}\Delta q_{l}^\top < \alpha I.\label{excitation_loss}    
\end{align}
Suppose that $q_T>\underline{q}_{T}+\underline{\eta}_{T}$ and $q_T<\overline{q}_{T}-\overline{\eta}_{T}$.
Then, the following relation holds for any $q^*\in\mathcal{X}_T^*$:
\begin{align}
    f(q_T)-f(q^*) < (\overline{q}_T-\underline{q}_T)\frac{\sqrt{m\alpha}+B}{\gamma_T},\label{prop_result}
\end{align}
where $B$ is an upper bound for $\|\zeta_l\|$, $l\geq 1$.
\end{prop}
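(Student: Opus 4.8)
The plan is to combine three ingredients already available in the excerpt: the loss-of-excitation bound \eqref{excitation_loss}, which forces the latest increment $\Delta q_T$ to be small; the interior-point hypothesis $\underline{q}_T+\underline{\eta}_T < q_T < \overline{q}_T-\overline{\eta}_T$, which guarantees that the Euclidean projection in the update of \eqref{main_ineq1} is \emph{inactive} at step $T$, so that the last iteration reduces to an ordinary (unclipped) stochastic gradient step; and the convexity inequality \eqref{grad_ineq}.

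First I would extract a scalar bound from the matrix inequality \eqref{excitation_loss}. Taking the trace of both sides yields $\sum_{l=1}^T \lambda^{T-l}\|\Delta q_l\|^2 < \Tr(\alpha I) = m\alpha$, where $m$ is the dimension of $q$. Since every summand is nonnegative and $\lambda^{T-T}=1$, the final term alone obeys $\|\Delta q_T\|^2 < m\alpha$, i.e. $\|\Delta q_T\| < \sqrt{m\alpha}$. This is the mechanism by which loss of persistent excitation translates into a quantitative smallness of the current step.

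Next, using that the projection is inactive at step $T$, the recursion collapses to $\Delta q_T = q_T - q_{T-1} = -\gamma_T\big(g_{T-1}+\zeta_{T-1}\big)$, so that $g_{T-1}+\zeta_{T-1} = -\Delta q_T/\gamma_T$. By the triangle inequality together with $\|\zeta_{T-1}\|\le B$ and the previous step, $\|g_{T-1}\| \le \|\Delta q_T\|/\gamma_T + \|\zeta_{T-1}\| < \sqrt{m\alpha}/\gamma_T + B$. I would then invoke convexity \eqref{grad_ineq} and Cauchy--Schwarz in the form $f(q_T)-f(q^*) \le g^\top(q_T-q^*) \le \|g\|\,\|q_T-q^*\|$, and bound $\|q_T-q^*\| \le \overline{q}_T-\underline{q}_T$ since both $q_T$ and $q^*\in\mathcal{X}_T^*$ lie in the feasible box. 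Substituting gives $f(q_T)-f(q^*) < (\overline{q}_T-\underline{q}_T)\big(\sqrt{m\alpha}/\gamma_T + B\big)$, and, because the step size satisfies $\gamma_T\le 1$, loosening $B \le B/\gamma_T$ produces exactly \eqref{prop_result}.

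The main obstacle I anticipate is the bookkeeping of the iterate index at which convexity is applied: the increment $\Delta q_T$ controlled by \eqref{excitation_loss} is tied to the gradient $g_{T-1}$ evaluated one step earlier, whereas the target is the gap $f(q_T)-f(q^*)$. Aligning these requires care---either by passing from $g_{T-1}$ to the gradient at $q_T$ through the Lipschitz relation \eqref{Lipschitz} (the correction is itself of order $\|\Delta q_T\|$, hence small), or by reading off the correct index directly in the update recursion. A secondary subtlety is the precise interpretation of $\overline{q}_T-\underline{q}_T$ as a diameter bound on $\|q_T-q^*\|$ and exactly how the dimension $m$ enters---through the trace step above, or through the conversion between $\ell_\infty$ and $\ell_2$ norms over the box---since either route is what supplies the stated $\sqrt{m\alpha}$ factor.
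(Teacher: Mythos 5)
Your proposal is correct and follows essentially the same route as the paper: take the trace of \eqref{excitation_loss} to bound the (weighted) sum of squared increments by $m\alpha$, drop all but the $l=T$ term, use the interiority hypothesis to identify $\Delta q_T$ with the unprojected step $-\gamma_T(g+\zeta)$, apply the triangle inequality with $\|\zeta\|\le B$, and finish with convexity \eqref{grad_ineq} plus the box diameter $\overline{q}_T-\underline{q}_T$ (the paper likewise implicitly needs $\gamma_T\le 1$ to absorb $B$ into $B/\gamma_T$). The indexing subtlety you flag is genuine, but the paper does not address it---it simply writes $g_T$ and $\zeta_T$ for the quantities entering the step that produces $q_T$ and applies \eqref{grad_ineq} at $q_T$ directly, so your more careful Lipschitz-correction route is, if anything, a tightening of the published argument.
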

\begin{proof}
It follows from \eqref{excitation_loss} that $\alpha I - \sum_{l=1}^T\lambda^{T-l}\Delta q_{l}\Delta q_{l}^\top$ is a positive definite matrix. Hence, by \cite[Corollary~7.1.5]{Horn_Johnson}, its trace is positive, namely,
\begin{align}
    \Tr\left(\alpha I -  \sum_{l=1}^T\lambda^{T-l}\Delta q_{l}\Delta q_{l}^\top\right) > 0,\label{trace_ineq}
\end{align}
where $\Tr(A) = \sum_i a_{i,i}$ denotes the trace of matrix $A=[a_{i,j}]$.
Then, by using the assumption that $q_T>\underline{q}_{T}+\underline{\eta}_{T}$ and $q_T<\overline{q}_{T}-\overline{\eta}_{T}$, and applying the triangle inequality, we have that
\begin{align}
    \sqrt{m\alpha} &> \sqrt{\Tr\left(\sum_{l=1}^T\lambda^{T-l}\Delta q_{l}\Delta q_{l}^\top\right)}\nonumber\\ 
    &\geq \|\gamma_T(g_T+\zeta_T)\|\geq \gamma_T(\|g_T\|-\|\zeta_T\|)\nonumber\\
    &\geq\gamma_T\|g_T\|-B,
\end{align}
Hence,
\begin{align}
    \|g_T\|< \frac{\sqrt{m\alpha}+B}{\gamma_T}.\label{grad_ineq2}
\end{align}
By using \eqref{grad_ineq} and the fact  that $q_T \in [\underline{q}_T,\overline{q}_T]$  and $q^* \in [\underline{q}_T,\overline{q}_T]$, we obtain that
\begin{align}
    \|g_T\|(\overline{q}_T-\underline{q}_T)\geq f(q_T)-f(q^*).\label{grad_ineq3}
\end{align}
By combining \eqref{grad_ineq3} with \eqref{grad_ineq2} the result in   \eqref{prop_result} follows.
\end{proof}

 \begin{figure}[b]
 \vspace{-0.2in}
		\centering
		\includegraphics[width=0.57 \textwidth]{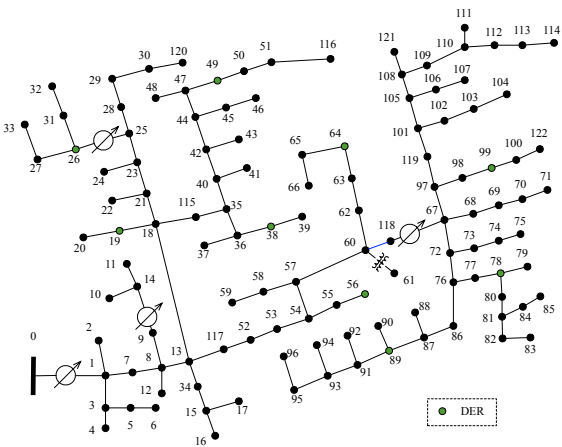}
		\vspace{-0.5in}
		\caption{IEEE 123-bus distribution test feeder.} \label{IEEE_123}
\end{figure}
\begin{figure}[h]
		\begin{center}
		\includegraphics[width=0.48\textwidth]{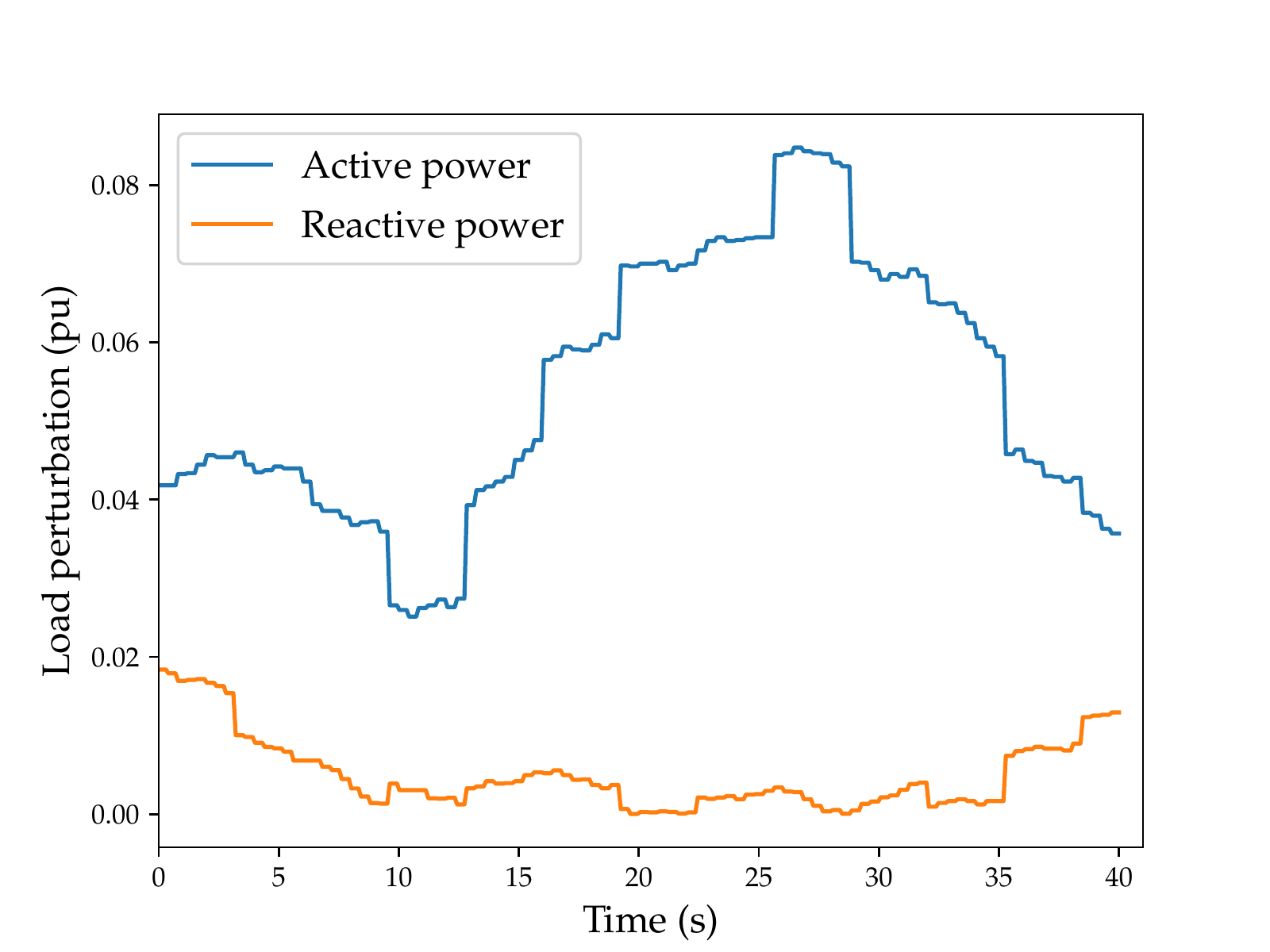}
		\vspace{-0.2in}
		\caption{Trajectories of active and reactive power at bus 1.} \label{load_perturbation}
		\end{center}
\end{figure}
 
 \section{Numerical Results}
 \label{sec:numericalResults}
Here,  we present numerical simulation results  illustrating the  effectiveness of our proposed online feedback optimization controller for voltage regulation in power distribution systems. To this end, we employ the test system  depicted in Fig.~\ref{IEEE_123}, which is a modified version of the three-phase balanced IEEE 123-bus distribution test feeder presented in \cite{XuDo19, ieee_pes_test_feeder}, with reactive-power-capable DERs added to the network at the following buses: $19$, $26$, $38$, $49$, $56$, $64$, $78$, $89$, and $99$. At other buses in the distribution network, random perturbations in the active and reactive power demand are introduced every $100$~milliseconds (for example, see Fig.~\ref{load_perturbation} for an illustration of the active and reactive power demand at bus~1). 
In this case we have that $n=122$ and $m=9$. Then, we set $F_0 = I_{9}$ and $\widehat{S}_{0}$ to the matrix that results from removing the last $113$  columns of the matrix $I_{122}$. Additionally, all the components of  $\underline{ q}_{k}$ and $\overline{ q}_{k}$ are set to $-0.5$~pu and $0.5$~pu, respectively, for all $k$. We also set $a_1 = 0.8$, $a_2 = 0.8$, $\lambda = 0.995$ and $\gamma_k = 0.95$, $k\in\{1,\dots,100\}$, $\gamma_k = 0.1$, $k\in\{101,\dots,400\}$.
~\\

\begin{figure}[h]
\vspace{-0.2in}
		\centering
		\hspace{0.49in}	\includegraphics[width=0.5\textwidth]{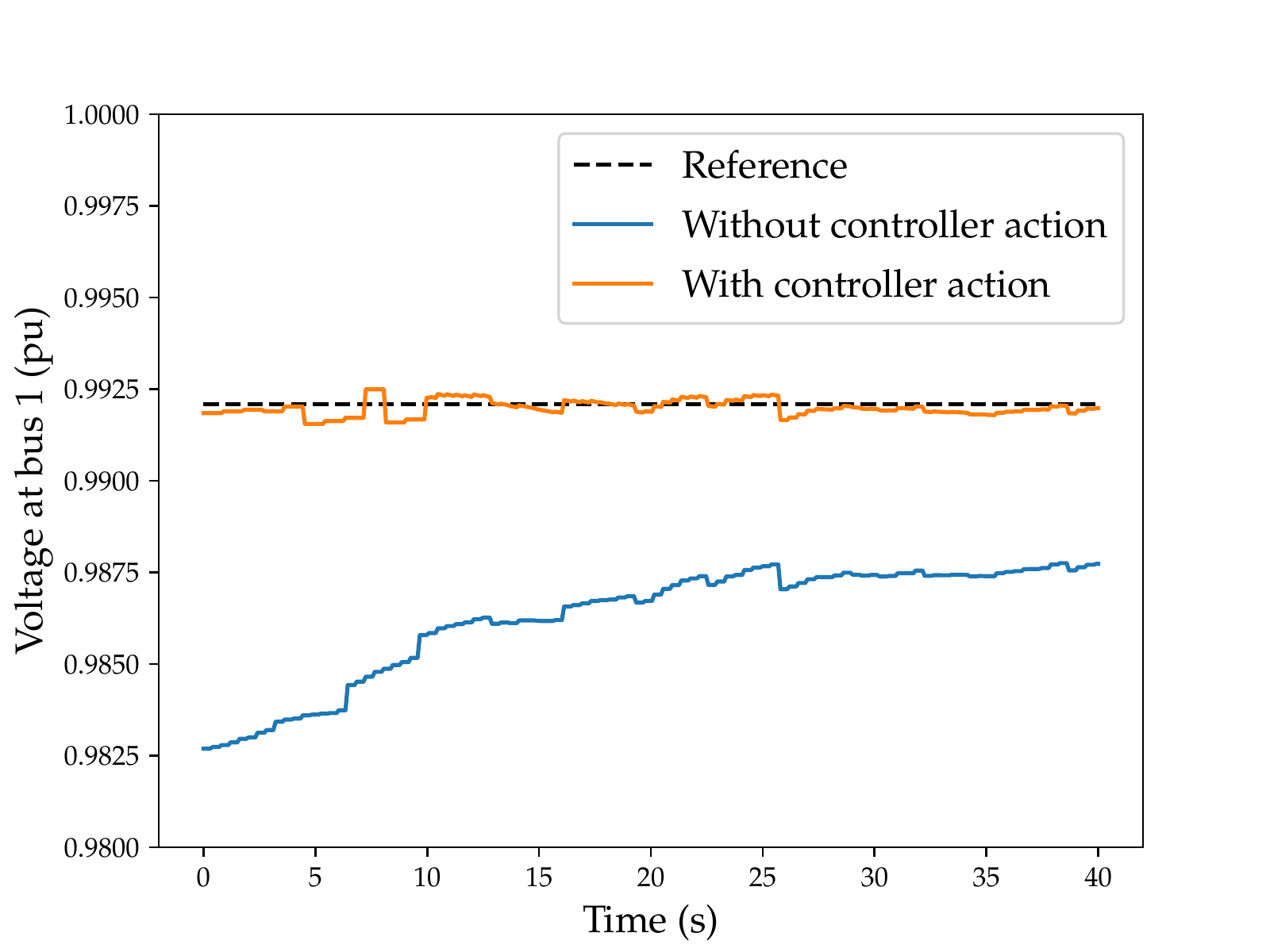}
		\vspace{-0.2in}	\caption{Trajectory of voltage magnitude at bus 1 for c = 0.} \label{Voltage trajectories c0}
\end{figure}

\begin{figure}[h]
\vspace{-0.02in}
		\centering
		\hspace{0.49in}	\includegraphics[width=0.485\textwidth]{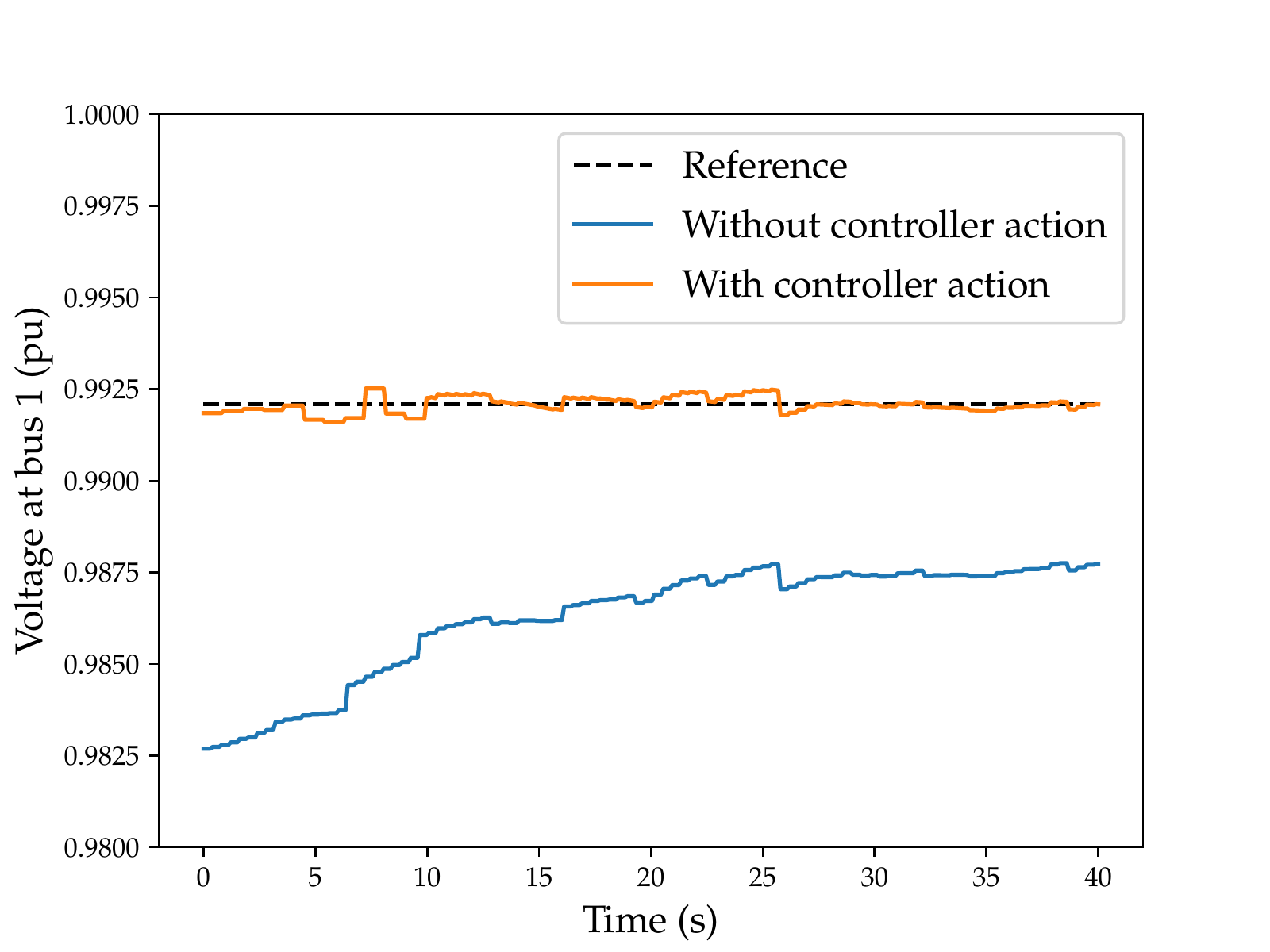}
		\vspace{-0.2in}	\caption{Trajectory of voltage magnitude at bus 1 for c = 0.5.} \label{Voltage trajectories c0.5}
\end{figure}

\begin{figure}[h]
\vspace{-0.1in}
		\centering
		\hspace{0.5in}	\includegraphics[width=0.5\textwidth]{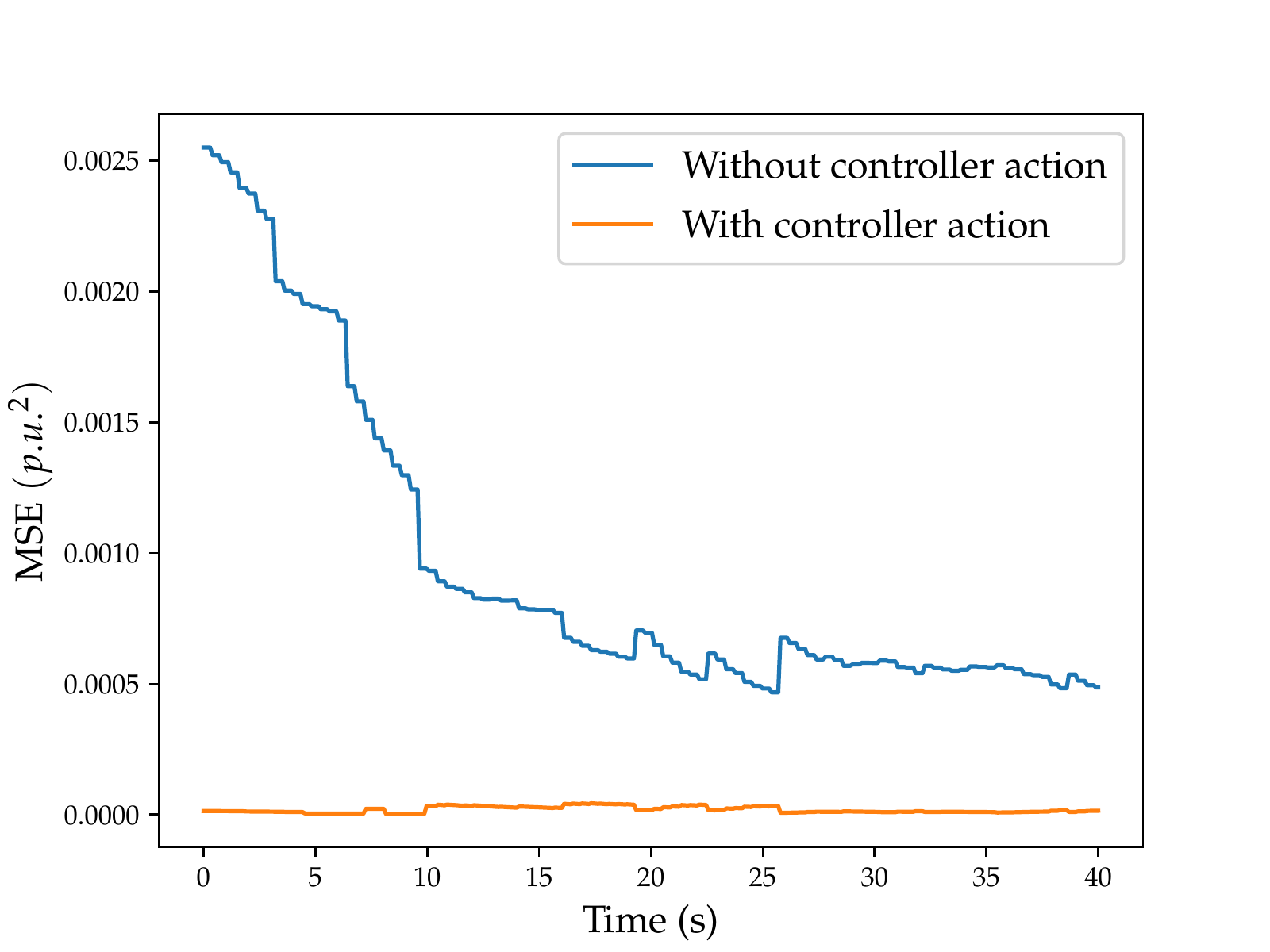}
		\vspace{-0.2in}	\caption{Trajectory of MSE of voltage magnitude for c = 0.} \label{MSE}
\end{figure}

Figures~\ref{Voltage trajectories c0} and ~\ref{Voltage trajectories c0.5} show the  trajectory followed by bus~1 voltage magnitude under i) no control action, i.e., $q_k=q_0$ for all $k \geq 1$, and ii) the action of the proposed controller for $c = 0$ and $c = 0.5$.  We also display the mean square error (MSE) of voltage magnitudes at all buses, i.e $\frac{\sum_{i=1}^n (v_i^*-v_i)^2}{n}$ in Fig.~\ref{MSE}. All our  numerical simulations demonstrate that our proposed controller is extremely effective at maintaining bus voltage magnitudes closed to their nominal value.

 \section{Concluding Remarks}
\label{sec:concludingRemarks}

In this paper we have proposed a controller for voltage regulation  in power distribution networks using reactive-power capable DERs. The proposed controller is based on a PSGD algorithm for solving online a sequence of optimization problems, each of which capturing the objectives and constraints of the voltage regulation problem at a particular time instant. By assuming the cost functions of each of these problems vary slowly with time, we can show that the PSGD-based algorithm acts as a feedback controller. In order to execute the controller, it is necessary to know the sensitivities of changes in bus voltage magnitudes with respect to changes in reactive power injections; we assume this are not a priori known and use a rLSE to estimate them. 

We showcased the performance of the controller via numerical simulations on the IEEE 123-bus system. In addition, under certain simplifying assumptions, we showed that the the sequence of DER setpoints generated by the controller converges almost surely to a solution of the aforementioned optimization problem when the estimates used by the controller are unbiased. In this regard, while the estimates generated by the rLSE will initially be unbiased, in  the limit this will no longer be the case; however, the simulation results show that the controller still performs satisfactorily. We plan to investigate this issue in future work and also investigate  the effect on the  convergence analysis of relaxing  the other assumptions we made in establishing the results in this paper. 

\appendix

\subsection*{Derivation of Sensitivity Estimator}\label{rLSE_derivaiton}
In order to derive \eqref{sensitivity_update_eqns}, consider the expression in \eqref{ls_sol}:
 \begin{align}\label{ls_sol_APP}
    \widehat{S}_{k}&=\left(\sum_{l=1}^k\lambda^{k-l}\Delta v_{l}\Delta q_{l}^\top\right) \Bigg(\sum_{l=1}^k\lambda^{k-l}\Delta q_{l}\Delta q_{l}^\top\Bigg)^{-1},
\end{align}
Define 
\begin{align}
   F_{k}^{-1}= \sum_{l=1}^k\lambda^{k-l}\Delta q_{l}\Delta q_{l}^\top;
\end{align}
then, we have that
\begin{align}
   F_{k}^{-1} &= \lambda \sum_{l=1}^{k-1}\lambda^{k-1-l}\Delta q_{l}\Delta q_{l}^\top + \Delta q_{k}\Delta q_{k}^\top \nonumber \\
   & = \lambda F_{k-1}^{-1} + \Delta q_{k}\Delta q_{k}^\top. \label{F_update_APP}
\end{align}
Then, by using the matrix inversion lemma (see, e.g.,   \cite{Horn_Johnson}), it follows that
\begin{align}
  F_{k}&=\Big (  \lambda F_{k-1}^{-1} + \Delta q_{k}\Delta q_{k}^\top\Big)^{-1}  \nonumber \\
  & =  \lambda^{-1}  F_{k-1} \nonumber \\
&-\frac{\lambda^{-2}}{1+ \lambda^{-1} \Delta q_{k} ^\top F_{k-1} \Delta q _{k}}   F_{k-1} \Delta q _k \Delta q^\top _k  F_{k-1}.
\end{align}
Now, we manipulate the expression in  \eqref{ls_sol_APP} as follows:
\begin{align}
        \widehat{S}_{k}&=\left(\sum_{l=1}^k\lambda^{k-l}\Delta v_{l}\Delta q_{l}^\top\right)  \Bigg(\sum_{l=1}^k\lambda^{k-l}\Delta q_{l}\Delta q_{l}^\top\Bigg)^{-1} \nonumber \\
        & = \left( \lambda \underbrace{\sum_{l=1}^{k-1}\lambda^{k-1-l}\Delta v_{l}\Delta q_{l}^\top}_{\widehat{S}_{k-1} F_{k-1}^{-1}} + \Delta v_{k}\Delta q_{k}^\top \right) F_{k} \nonumber \\
        & =  \left( \lambda \widehat{S}_{k-1} F_{k-1}^{-1} + \Delta v_{k}\Delta q_{k}^\top \right) F_{k} \nonumber \\
        &= \left(   \widehat{S}_{k-1}  \Big( F_k^{-1}- \Delta q_{k}\Delta q_{k}^\top \Big) + \Delta v_{k}\Delta q_{k}^\top \right) F_{k} \nonumber\\
        &=  \widehat{S}_{k-1} + \Big (\Delta v_{k} - \widehat{S}_{k-1}  \Delta q_{k}    \Big) \Delta q_{k}^\top F_k,
        \label{S_der_inter}
\end{align}
 where the next to the last equality follows from \eqref{F_update_APP}.

\bibliographystyle{IEEEtran}

\bibliography{references}

\end{document}